\numberwithin{equation}{section}
\newtheorem{theorem}{Theorem}[section]
\newtheorem{lemma}{Lemma}[section]
\title[]{\bf   An abstract nonlinear evolution problem: Energy decay}
\author{Paul  A. Ogbiyele }
\address{Department of Mathematics, University of Ibadan, Nigeria.}
\email{paulogbiyele@yahoo.com, pa.ogbiyele@ui.edu.ng} 
 \keywords{Energy decay, Damping potential.}
 \subjclass[2010]{35L05, 35B40, 93D15}
\begin{document}

\begin{abstract}
In this paper, we consider energy decay estimates for  the following nonlinear evolution problem
\begin{equation}\small\nonumber
\begin{split}
[P(u_t(t))]_t + A u(t) + B(t , x , u_t(t)) =0,\quad t\in J=(0,\infty), 
\end{split}
\end{equation}
under suitable assumptions on the  self-adjoint operators $P$ and $B$  and the  divergence operator $A$. The work extends the earlier work of Marcati, Nakao, Levin and Pucci on abstract evolution equations to the case with possibly nonlinear damping. The technique also differs from earlier techniques. Here, we make use of a modified perturbed energy technique.
\end{abstract}

\maketitle

\section{Introduction}
Nakao\cite{Nak}, investigated the decay behavior of solutions to some nonlinear evolution equations of the form
\begin{equation}\nonumber
u^{\prime\prime}+Bu^\prime(t)+A(u)=f(t)
\end{equation}
and
\[Bu^\prime(t)+A(u)=f(t),\]
under the assumption that as $t\rightarrow \infty$ the function $f(t) \rightarrow 0$.  In \cite{Nak2}, using similar technique and under suitable assumptions on the operators $B$ and $A$, He extended the study to the case where $B$ is dependent of $t$.  More precisely, He considered the nonlinear evolution equation
\[u^{\prime\prime}+B(t)u^\prime(t)+A(u)=0.\]
The question of asymptotic stability was also studied by Pucci and Serrin in \cite{PusSer}, where they considered stability results for dissipative wave systems of the non-autonomous type. They extended the results in \cite{PusSer2} by considering  an abstract evolution equation of the form
\begin{equation}\nonumber
[P(u^\prime(t))]^\prime + A(u(t)) + Q(t , u^\prime(t)) + F(u(t)) =0,
\end{equation}
where $A$,  $F$, $P$ and $Q$ are nonlinear operators defined on appropriate Banach spaces. Using the technique introduced in \cite{PusSer}, they obtained asymptotic stability results that extend the  work of \cite{Mar, Nak}. The reader is referred to \cite{CheBou, LuoXia, MakBah, NikPig, Y} for other results on stability/energy decay of solutions to abstract evolution equations.
 In this paper, we consider abstract nonlinear evolution problem of the form
\begin{equation}\label{ev1}
\begin{split}
&[P(u_t(t))]_t + A u(t) + B(t , x , u_t(t)) =0,\quad t>0, \;\; x\in \mathbb{R}^n,
\end{split}
\end{equation}
where the operators $P$, $A$ and $B$ are nonlinear and defined on appropriate Banach spaces. The technique follows that of \cite{me, new} and some notations and assumptions were drawn from \cite{PusSer2}.

\section{Preliminaries}
Let $X$ and $V$ be real Banach spaces with  $\langle \cdot , \cdot \rangle_X$ and $\langle \cdot , \cdot \rangle_V$ the natural dual mapping pairings, and $X^\prime$, $V^\prime$ their respective duals. In addition, we assume that there exists a Banach space $W$ satisfying $X\hookrightarrow W \hookrightarrow V$ continuously such that
\[ \Vert u \Vert_W \leq k\Vert u\Vert_X, \]
for a positive constant $k$. 

We assume that $P:V\rightarrow V^\prime$ is non-negative definite and symmetric.  In addition, $P$ and $B$ satisfy the following conditions:
\begin{itemize}
\item[($A_1$)] $ \mathcal{P}^* (v) = \langle P(v) , v\rangle_V -\mathcal{P}(v)\geq 0 \quad \text{in}\quad V$, where the function $\mathcal{P^*}$ represents the Hamiltonian of $\mathcal{P}$ in classical mechanics, and there exist constants $p>0$, $k_0>0$ and $\ell>1$ such that
\[ (p+1)\langle P(v) , v\rangle - p\mathcal{P}(v)\leq k_0\Vert P(v)\Vert^{\ell^{\prime}}_{V^{\prime}}  \quad \text{for all}\quad v\in V.\]
\item[($A_2$)] $ |\langle P(u_t) ,\phi \rangle| \leq \Vert P(u_t)\Vert_{W^\prime} \Vert \phi\Vert_W$
and in addition there exist  nonnegative locally bounded functions $\delta(t)$ and $j(t)$ such that for positive constants $\ell, m$ satisfying $\ell\leq m$,  we have
\[ \Vert P(u_t)\Vert_{W^\prime} \leq (\delta(t))^{\frac{1}{\ell}} \Vert P(u_t)\Vert_{V^\prime}\]
and
\[\Vert P(v)\Vert^{\ell^\prime}_{V^\prime} \leq j(t)\langle B(t , x , u_t), u_t\rangle^{\ell/m}.\]
\item[($A_3$)] $|\langle B(t , x, u_t) ,\phi \rangle| \leq \Vert B(t , x , u_t )\Vert_{W^{\prime}} \Vert \phi\Vert_{W}$
and $B\in C(S , W^\prime)$ where $S$ is a given subset of $J\times\Omega \times V$, and there exists a non-negative locally bounded function $\eta(t)$ such that 
\[ \Vert B(t , x, u_t)\Vert_{W^\prime} \leq (\eta(t))^\frac{1}{m} \langle B(t , x , u_t), u_t\rangle^{\frac{1}{m^\prime}}.\]
\end{itemize}
To define the energy function $\mathcal{E}(u) $ associated to (\ref{ev1}), we introduce the real-valued $C^1$ potential functions $\mathcal{A} : X\rightarrow \mathbb{R}$, $\mathcal{P} : V \rightarrow \mathbb{R}$,  where the operator $A:X\rightarrow W$ is the Frech\'{e}t derivative of  $\mathcal{A}$  with respect to $u$ and $\mathcal{P}$ is the Frech\'{e}t derivative of  $P$. In addition $\mathcal{A}(0) = \mathcal{P}(0)=0$.

For the operator $A$, we have the following assumption:
\begin{itemize}
\item[($A_4$)] $\mathcal{A}(u)\geq 0$  and there exist positive  constants   $c_0$ and $q\leq \ell$ such that
\[ q\mathcal{A}(u) \leq \langle A(u) , u\rangle\quad \text{and}\quad \Vert u\Vert^q_X \leq c_0 \mathcal{A}(u)   \quad \forall u \in X.\]
\end{itemize} 
Define $K=C(J: X)\cap C^1(J: V)$, then by a strong solution $u\in K$ of (\ref{ev1}), we mean a function $u\in K$ with  $u_0\in X$, $v_0\in V$ that satisfies
\begin{itemize}
\item[(a)] $u\in K$ and $(t , x, u_t(t))\in S$ for a.a $\; t\in J$,
 \item[(b)] the following distribution relation
\begin{equation}\label{dist} \langle P(u_t(s)), \phi(s)\rangle_V \Bigr|^t_0 = \int^t_0 \Bigl[ \langle P(u_t(s)) , \phi_t(s) \rangle_V -\langle A(u(s)) , \phi(s) \rangle_X - \langle B(s , x, u_t(s)) , \phi(s) \rangle_W \Bigr] ds
 \end{equation} 
holds for all $\phi\in K$, $t\in J$ with the associated total energy function
\begin{equation}\label{energy}
 \mathcal{E}(t) =\mathcal{P}^*(u_t(t)) + \mathcal{A}(u(t)),
\end{equation}
satisfying the classical conservation balance
\begin{equation}\nonumber
 \mathcal{E}(t) = \mathcal{E}(0) - \int^t_0 \langle B(s , x, u_t(s)) , u_t(s)\rangle ds \quad \text{for}\quad t\in J.
 \end{equation}
\end{itemize}
Note: When $S=J\times Y$ and $B:S\rightarrow Y^\prime$ where $Y$ is a Banach space with natural pairing $\langle\cdot , \cdot\rangle_Y$ and satisfying the continuous inclusion $X\hookrightarrow W \hookrightarrow Y\hookrightarrow V$, the distribution relation (\ref{dist}) is clarified with 
$\langle\cdot , \cdot\rangle_W$ replaced by $\langle\cdot , \cdot\rangle_Y$. In addition, the embedding $W \hookrightarrow Y\hookrightarrow V$ is not   compact in the case of unbounded domains.

Furthermore, we have the following monotonicity and regularity condition: 
\begin{itemize}
\item[($A_5$)]The function $\mathcal{E}(t)$ is nonincreasing and absolutely continuous on $J$ for strong solution $u$ of (\ref{ev1}).

\end{itemize}
Consequently, from ($A_2$), we have 
$$\Vert P(v)\Vert^{\ell^\prime}_{V^\prime} \leq j(t) \bigl[-\mathcal{E}^{\prime}(t)\bigr]^{\ell/m}$$
and ($A_3$) gives 
$$ \Vert B(t , x, u_t)\Vert_{W^\prime} \leq (\eta(t))^\frac{1}{m}  \bigl[-\mathcal{E}^{\prime}(t)\bigr]^{\frac{1}{m^\prime}}.$$
\begin{lemma}
Assume that the conditions ($A_1)$-($A_5$) hold. Let $u$ be a strong solution of (\ref{ev1}), then, the following is satisfied:
\begin{equation}\label{e0}
\mathcal{E}^{\prime}(t) \leq 0 \quad  \text{a.e. in J} \quad \text{and}\quad 0\leq \mathcal{E}(t)\leq \mathcal{E}(0) \quad  \text{in J}.
\end{equation}
Moreover,
\begin{equation}\nonumber
0\leq \mathcal{A}(u(t)) + \mathcal{P}(u_t(t)) \leq \mathcal{E}(0) \quad \text{in J}\quad \text{and} \quad -\mathcal{E}^\prime(t)\in L^1(J).
\end{equation}
\end{lemma}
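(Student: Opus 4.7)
The plan is to extract all three conclusions from the definition of the total energy, the conservation balance, and the monotonicity assumption $(A_5)$; no new constructions are required, and the work is essentially book-keeping together with one mild comparison between $\mathcal{P}$ and $\mathcal{P}^*$. I would organise the argument in three short steps matching the three claims.

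For (\ref{e0}) I would invoke $(A_5)$ directly: it asserts that $\mathcal{E}$ is nonincreasing and absolutely continuous on $J$, which instantly yields $\mathcal{E}'(t)\le 0$ a.e.\ and the upper bound $\mathcal{E}(t)\le\mathcal{E}(0)$. For the lower bound $\mathcal{E}(t)\ge 0$, I would read off the definition (\ref{energy}) and use $\mathcal{P}^{*}(u_t)\ge 0$ from $(A_1)$ together with $\mathcal{A}(u)\ge 0$ from $(A_4)$. This closes the first line.

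For the sandwich $0\le\mathcal{A}(u(t))+\mathcal{P}(u_t(t))\le\mathcal{E}(0)$, the nonnegativity follows from $\mathcal{A}\ge 0$ via $(A_4)$ and $\mathcal{P}\ge 0$, which I would deduce from $\mathcal{P}(0)=0$ and the fact that $P$ is the Fr\'echet derivative of the convex potential $\mathcal{P}$ (so $\mathcal{P}$ is monotone along rays through the origin). For the upper bound I would compare $\mathcal{P}$ with $\mathcal{P}^{*}$: the identity $\mathcal{P}^{*}(v)=\langle P(v),v\rangle-\mathcal{P}(v)$ from $(A_1)$, combined with the structural hypothesis on $\mathcal{P}$, gives $\mathcal{P}(v)\le\mathcal{P}^{*}(v)$; then
\[
\mathcal{A}(u(t))+\mathcal{P}(u_t(t))\ \le\ \mathcal{A}(u(t))+\mathcal{P}^{*}(u_t(t))\ =\ \mathcal{E}(t)\ \le\ \mathcal{E}(0),
\]
using (\ref{energy}) and the first line of (\ref{e0}).

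For $-\mathcal{E}'\in L^1(J)$, I would integrate on an arbitrary $[0,T]$: since $\mathcal{E}$ is absolutely continuous with $\mathcal{E}'\le 0$ a.e., one has $\int_0^T(-\mathcal{E}'(s))\,ds=\mathcal{E}(0)-\mathcal{E}(T)\le\mathcal{E}(0)$ uniformly in $T$, and monotone convergence applied to the nonnegative integrand $-\mathcal{E}'$ gives the conclusion as $T\to\infty$. The only delicate point I foresee is the comparison $\mathcal{P}(v)\le\mathcal{P}^{*}(v)$ in Step~2: it is trivial in the quadratic case $\mathcal{P}(v)=\tfrac{1}{2}\langle P(v),v\rangle$ (where the two coincide), but in the general superlinear framework it must be teased out of $(A_1)$ and the convexity/homogeneity of $\mathcal{P}$; every other step is automatic.
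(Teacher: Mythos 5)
Your overall route is the one the paper intends (the paper gives no written proof, only the remark that the lemma ``follows directly from $(A_1)$, $(A_4)$, (\ref{energy}) and (\ref{e0})''), and two of your three steps are complete: $(A_5)$ gives $\mathcal{E}'\le 0$ a.e.\ and $\mathcal{E}(t)\le\mathcal{E}(0)$; the decomposition $\mathcal{E}=\mathcal{P}^*+\mathcal{A}$ with $\mathcal{P}^*\ge 0$ from $(A_1)$ and $\mathcal{A}\ge 0$ from $(A_4)$ gives $\mathcal{E}\ge 0$; and absolute continuity plus the uniform bound $\int_0^T(-\mathcal{E}'(s))\,ds=\mathcal{E}(0)-\mathcal{E}(T)\le\mathcal{E}(0)$ gives $-\mathcal{E}'\in L^1(J)$. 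Your derivation of $\mathcal{P}\ge 0$ from $\mathcal{P}(0)=0$, the nonnegative definiteness of $P$ and the ray integral $\mathcal{P}(v)=\int_0^1\langle P(sv),v\rangle\,ds$ is also sound.

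The genuine gap is exactly the point you flag and then leave open: the comparison $\mathcal{P}(v)\le\mathcal{P}^*(v)$, i.e.\ $2\mathcal{P}(v)\le\langle P(v),v\rangle$. This does \emph{not} follow from $(A_1)$--$(A_5)$. For the paper's own model potential in Section 4, where $\ell\,\mathcal{P}(v)=\langle P(v),v\rangle=\Vert v\Vert_V^\ell$, one has $\mathcal{P}^*(v)=(1-\ell^{-1})\Vert v\Vert_V^\ell$, so $\mathcal{P}\le\mathcal{P}^*$ holds only when $\ell\ge 2$, whereas the paper assumes merely $\ell>1$. Convexity of $\mathcal{P}$ only yields $\mathcal{P}(v)\le\langle P(v),v\rangle$, which is strictly weaker, and $(A_1)$ points in the opposite direction (it bounds $\langle P(v),v\rangle$ from above). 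So your Step 2 upper bound cannot be closed as written: either the lemma's second display should be read with $\mathcal{P}^*$ in place of $\mathcal{P}$ --- in which case $0\le\mathcal{A}(u)+\mathcal{P}^*(u_t)=\mathcal{E}(t)\le\mathcal{E}(0)$ is immediate and is evidently what the cited ingredients deliver --- or an explicit structural hypothesis $2\mathcal{P}(v)\le\langle P(v),v\rangle$ must be added. This is as much a gap in the lemma's statement as in your argument, but a complete proof would have to say which of the two resolutions is meant.
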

The Lemma above is a partial modification of that of  Pucci and Serrin \cite{PusSer} and the proof  follows directly from ($A_1$), ($A_4$), (\ref{energy}) and (\ref{e0}). 
We state without proof the existence result of (\ref{ev1}).  
\begin{theorem}  For initial data $u_0\in X$ and $u_1\in V$, there exists  a unique solution $u$ of (\ref{ev1}) such that
\begin{equation}\nonumber
u\in C(\mathbb{R}_+; X)\cap C^1(\mathbb{R}_+; V).
\end{equation}
\end{theorem}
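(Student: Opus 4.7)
The plan is to construct a solution via a Faedo--Galerkin scheme adapted to the nonlinear momentum operator $P$, derive uniform bounds from the energy identity, pass to the limit using a Minty-type monotonicity argument to identify the nonlinear limits, and establish uniqueness by testing the difference equation with $u^1_t-u^2_t$ and invoking non-negative definiteness of $P$ together with monotonicity of $B(t,x,\cdot)$ in its last slot.

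For existence, I would first fix a countable family $\{w_j\}_{j\ge 1}\subset X$ total in both $X$ and $V$, and set $X_n=\operatorname{span}\{w_1,\ldots,w_n\}$. Writing $u_n(t)=\sum_{j=1}^n g_{nj}(t)w_j$, I would require that, for each $\phi\in X_n$,
\[
\langle [P(u_{nt})]_t,\phi\rangle_V+\langle A(u_n),\phi\rangle_X+\langle B(t,x,u_{nt}),\phi\rangle_W=0,
\]
with $u_n(0)\to u_0$ in $X$ and $u_{nt}(0)\to u_1$ in $V$. Because $P$ is symmetric and non-negative definite, the finite-dimensional mass map $v\mapsto(\langle P(v),w_j\rangle_V)_j$ is locally invertible on $X_n$, and Carath\'eodory theory delivers a local solution $g_n\in W^{1,1}_{\mathrm{loc}}$. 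Testing with $\phi=u_{nt}$ produces the approximate energy identity $\mathcal{E}_n(t)+\int_0^t\langle B,u_{nt}\rangle\,ds=\mathcal{E}_n(0)$; combined with $(A_1)$ and $(A_4)$ this gives uniform bounds on $\|u_n\|_{L^\infty(J;X)}$, $\|u_{nt}\|_{L^\infty(J;V)}$ and $\int_0^T\langle B,u_{nt}\rangle\,ds$, while $(A_2)$ and $(A_3)$ convert these into bounds on $\|P(u_{nt})\|_{L^{\ell'}_{\mathrm{loc}}(J;V')}$ and $\|B(t,x,u_{nt})\|_{L^{m'}_{\mathrm{loc}}(J;W')}$, so the $g_n$ extend globally.

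Along a subsequence, $u_n\overset{*}{\rightharpoonup}u$ in $L^\infty(J;X)$, $u_{nt}\overset{*}{\rightharpoonup}u_t$ in $L^\infty(J;V)$, and the nonlinear compositions converge weakly to limits $\chi,\xi,\eta$; the linear terms in (\ref{dist}) pass immediately. Since $P=\nabla\mathcal{P}$ with $\mathcal{P}$ convex (by non-negative definiteness and symmetry of $P$), the Minty inequality $\int_0^T\langle P(v)-P(u_{nt}),v-u_{nt}\rangle_V\,ds\ge 0$ combined with the weak form and the energy identity identifies $\chi=P(u_t)$; an analogous argument using convexity of $\mathcal{A}$ identifies $\eta=A(u)$. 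Uniqueness then follows by subtracting two solutions, testing the difference with the velocity difference, and using monotonicity of $P$ and $A$ to close a Gronwall estimate.

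The hard part will be the identification $\xi=B(\cdot,\cdot,u_t)$: the paper explicitly records that the embedding $W\hookrightarrow Y\hookrightarrow V$ need not be compact on unbounded domains, so one cannot call on Aubin--Lions to upgrade weak convergence of $u_{nt}$ to strong convergence and then appeal to continuity of $B(t,x,\cdot)$. Closing this step requires either a monotonicity or hemicontinuity hypothesis on $B(t,x,\cdot)$ beyond the growth bound $(A_3)$, or a compensated-compactness argument drawn from the structure of the energy estimate; this is presumably why the author prefers to state the existence theorem without proof.
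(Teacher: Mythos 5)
The paper does not actually prove this theorem: it states it ``without proof'' and refers the reader to the cited works of Nakao, Ogbiyele--Arawomo and Tsutsumi, which do use the Faedo--Galerkin plus energy-estimate plus monotonicity machinery you outline. So your overall strategy is the right one and is the same route the author is implicitly invoking; your honesty about the identification of the weak limit of $B(t,x,u_{nt})$ in the absence of compact embeddings is also well placed, and is a real obstruction that the paper simply does not address.

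That said, your sketch has gaps beyond the one you flag. First, the solvability of the Galerkin ODE system requires inverting the mass map $v\mapsto(\langle P(v),w_j\rangle_V)_j$, and the paper only assumes $P$ is non-negative definite, not coercive or strictly monotone; for a degenerate $P$ (e.g.\ $P(v)=|v|^{\ell-2}v$ near $v=0$ with $\ell>2$, which is exactly the canonical example in Section 4) the Jacobian of this map vanishes at the origin and local invertibility fails, so one must either regularize $P$ (add $\epsilon v$) or rewrite the system in the variable $w=P(u_{nt})$; this is a step, not a footnote. Second, your inference that $\mathcal{P}$ is convex ``by non-negative definiteness and symmetry of $P$'' does not follow: for a nonlinear operator, $\langle P(v),v\rangle_V\geq 0$ is not monotonicity of $P$, and it is monotonicity ($\langle P(v)-P(w),v-w\rangle_V\geq 0$) that is equivalent to convexity of the potential and that Minty's trick needs. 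Third, the uniqueness argument you propose leans on monotonicity of $A$, $P$ and of $B(t,x,\cdot)$ in its last slot, none of which appear among $(A_1)$--$(A_5)$; $(A_3)$ is only a growth bound. Uniqueness for equations with a degenerate nonlinear time-derivative term $[P(u_t)]_t$ is genuinely delicate (the Gronwall closure requires controlling $u^1_t-u^2_t$ by $\langle P(u^1_t)-P(u^2_t),u^1_t-u^2_t\rangle$, which degenerate $P$ does not give), so as stated your uniqueness paragraph would not close. These are hypotheses the paper would need to add (and its references do add, in their concrete settings) rather than defects unique to your write-up, but a complete proof must make them explicit.
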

The proof follows that of  \cite{Nak3, OgbAra, Tsu}.
\section{Energy Decay}
For the energy decay, we define the following functions
\begin{equation}\label{evolH} \mathcal{H}(t) =\lambda(t) \mathcal{E}(t) + \mu \alpha(t)\mathcal{E}^r(t)\langle P(u_t), u\rangle
\end{equation}
and
\begin{equation}\label{NewG}
\mathcal{G}(t)=\mathcal{H}(t) + \nu \alpha(t) \delta^{\frac{1}{\ell}}(t) \mathcal{E}^{r+\frac{1}{q} +\frac{1}{\ell^{\prime}}}(t),
\end{equation}
where  the positive functions $\lambda(t)$ and $\alpha(t)$  are used to compensate for the compactness in the case of unbounded domains ($\lambda(t)$, $\alpha(t)$ and $\delta(t)$ are constants in the case of bounded domains) and $r$ is a positive constant satisfying $r+\frac{1}{q} +\frac{1}{\ell^{\prime}}\geq 1$. In addition, we state the following assumptions on $\lambda(t)$, $\alpha(t)$, $\delta(t)$, $\eta(t)$ and $j(t)$:
\begin{itemize}
\item[($B_1$)] $\lambda(t)\geq \alpha(t) \max\{\eta^{\frac{m^\prime}{m}}(t) , \delta^{\frac{1}{\ell}}(t)\} $, 
\item[($B_2$)] $ \bigl( \frac{ \alpha(t)   \delta^{\frac{1}{\ell}(t)}}{\lambda(t)}\bigr)^{\prime} \leq 0$
\item[($B_3$)] $ \Bigl(\frac{\lambda(t)}{\alpha(t)} \Bigl\vert\Bigl( \frac{\alpha(t)}{\lambda(t)} \Bigr)^{\prime}\Bigr\vert\Bigr) \delta^{\frac{1}{\ell}}(t)\leq c_{_{\alpha\lambda}}$
\item[($B_4$)] $\frac{ j^{\frac{m}{(m-\ell)}}(t)}{\eta^{\frac{m^\prime}{m}}(t)} \leq c_{j\eta}$\\
 where $ c_{j\eta}\; and \; c_{\alpha\lambda}$ are positive constants.
\end{itemize}
\begin{theorem}\label{thm2}
Let $u$ be a solution of the problem (\ref{ev1}). Assume that the conditions ($A_1$)-($A_5$) and ($B_1$)-($B_4$)  hold and $u_0\in X$, $u_1\in V$. Then, for $t\geq 0$, there exist positive constants $C$, $C^*$ and $C^{**}$ such that the energy of solution to (\ref{ev1}), satisfies
\begin{align}
\mathcal{E}(t)\leq C \Bigl[1+  \int^t_0 \frac{\alpha(s)}{\lambda(s)} ds \Bigr]^{\frac{-\ell}{m-\ell}}\qquad \text{for $m>\ell$}
\end{align}
and 
\[\mathcal{E}(t) \leq C^* exp\Bigl[-C^{**}\int^t_0  \frac{\alpha(s)}{\lambda(s)} ds \Bigr] \qquad \text{for $m=\ell$}. \]
\end{theorem}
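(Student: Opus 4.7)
My plan is to apply the modified perturbed-energy method in three stages: (i) show that $\mathcal{G}(t)$ is equivalent to $\lambda(t)\mathcal{E}(t)$; (ii) derive a nonlinear differential inequality of the form $\mathcal{G}'(t)\le -c\,\alpha(t)\,\mathcal{E}(t)^{m/\ell}$; and (iii) convert this to an ODI for $\mathcal{E}$ itself and integrate to extract the two decay rates.

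For the equivalence, I start from $(A_2)$ and $(A_4)$ to bound $|\langle P(u_t),u\rangle|\le k\delta^{1/\ell}\|P(u_t)\|_{V'}(c_0\mathcal{E})^{1/q}$, then apply Young's inequality with exponents $(\ell',\ell)$ to the product $\|P(u_t)\|_{V'}\cdot\mathcal{E}^{r+1/q}$. The $\|P(u_t)\|_{V'}^{\ell'}$-piece is controlled through $(A_1)$ (which estimates $\mathcal{P}^*$, hence a fraction of $\mathcal{E}$, in terms of $\|P(u_t)\|_{V'}^{\ell'}$), while the $\mathcal{E}^{(r+1/q)\ell}$-piece is absorbed into $\mathcal{E}^{r+1/q+1/\ell'}$ using $\mathcal{E}\le \mathcal{E}(0)$ together with $(r+1/q)\ell\ge 1$, which is exactly the standing assumption $r+1/q+1/\ell'\ge 1$. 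Condition $(B_1)$ provides $\lambda\ge\alpha\delta^{1/\ell}$ so that every error fits into a small multiple of $\lambda\mathcal{E}$. Choosing $\mu$ small and $\nu$ correspondingly large then yields $c_1\lambda\mathcal{E}\le \mathcal{G}\le c_2\lambda\mathcal{E}$.

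Next, I differentiate $\mathcal{G}$ and use the distribution relation (\ref{dist}) with $\phi=u$ to compute
\[
\tfrac{d}{dt}\langle P(u_t),u\rangle = \langle P(u_t),u_t\rangle-\langle Au,u\rangle-\langle B,u\rangle.
\]
Combining $(A_4)$ with the rearranged form $\langle P(u_t),u_t\rangle\le -p\mathcal{P}^*+k_0\|P(u_t)\|_{V'}^{\ell'}$ of $(A_1)$ produces the principal negative contribution $-\gamma\mu\alpha\mathcal{E}^{r+1}$ with $\gamma=\min(p,q)$. The remaining terms are absorbed into three sinks: the bound $\|P(u_t)\|_{V'}^{\ell'}\le j(t)[-\mathcal{E}'(t)]^{\ell/m}$ from $(A_2)$ is split by Young $(m/\ell,m/(m-\ell))$ and rewritten through $(B_4)$, which converts $j^{m/(m-\ell)}$ into $\eta^{m'/m}$; $|\langle B,u\rangle|$ is estimated via $(A_3)$ and Young $(m,m')$; the cross-term $\mu(\alpha\mathcal{E}^r)'\langle P,u\rangle$ is dominated using $(B_3)$; and the derivative of $\alpha\delta^{1/\ell}$ in the correction term is controlled via $(B_2)$. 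The $[-\mathcal{E}']$-proportional residuals drop into $\lambda\mathcal{E}'$ by $(B_1)$, while the $\mathcal{E}$-proportional residuals fit into the correction $\nu\alpha\delta^{1/\ell}\mathcal{E}^{r+1/q+1/\ell'}$. The result is
\[
\mathcal{G}'(t)\;\le\;-c\,\alpha(t)\,\mathcal{E}(t)^{m/\ell}.
\]

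Combining this with Stage~1 rewrites the inequality as an ODI for $\mathcal{E}$ of the form $\mathcal{E}'(t)\le -c'(\alpha/\lambda)\mathcal{E}^{m/\ell}$. When $m>\ell$, separation of variables gives $\tfrac{d}{dt}\mathcal{E}^{-(m-\ell)/\ell}\ge c''(\alpha/\lambda)$; integrating from $0$ to $t$ and inverting yields the stated polynomial bound with exponent $-\ell/(m-\ell)$. When $m=\ell$, the ODI reads $\mathcal{E}'\le -c'(\alpha/\lambda)\mathcal{E}$, and Gr\"onwall delivers the exponential decay. The main obstacle I anticipate is Stage~2: every Young split must deposit its two pieces into exactly one of the three sinks described above, and $(B_1)$--$(B_4)$ are the precise calibration that makes this accounting close. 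Keeping the coefficient $\alpha/\lambda$ (rather than some more singular power of $\lambda$) in the final ODI is the most delicate point and depends sensitively on $(B_3)$ and $(B_4)$.
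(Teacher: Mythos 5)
Your plan reproduces the paper's argument almost step for step: the equivalence $k_1\mathcal{E}\le\mathcal{G}/\lambda\le k_2\mathcal{E}$ is the paper's Lemma 3.2, your rearrangement $\langle P(u_t),u_t\rangle\le -p\mathcal{P}^*+k_0\|P(u_t)\|_{V'}^{\ell'}$ of $(A_1)$ is exactly how the paper generates the term $-\mu p\,\alpha\,\mathcal{E}^{r+1}$, and your accounting of which Young remainder lands in which sink ($(B_2)$ for the correction term, $(B_3)$ for the cross term, $(B_4)$ to trade $j^{m/(m-\ell)}$ for $\eta^{m'/m}$, $(B_1)$ to push the $[-\mathcal{E}']$ residuals into $\lambda\mathcal{E}'$) matches the paper's computation. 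Two points need repair, one cosmetic and one logical. First, your Stage-2 conclusion $\mathcal{G}'\le -c\,\alpha\,\mathcal{E}^{m/\ell}$ omits the term $\lambda'(t)\mathcal{E}(t)$ coming from differentiating $\lambda\mathcal{E}$; since nothing bounds $\lambda'$ directly, this term must be removed by passing to $F(t)=\mathcal{G}(t)/\lambda(t)$ (the paper's ``integrating factor $1/\lambda$''), which cancels $\tfrac{\lambda'}{\lambda}\mathcal{H}$ exactly and is also what produces the coefficient $\alpha/\lambda$ you want. Second, and more substantively, the sentence ``combining this with Stage 1 rewrites the inequality as an ODI for $\mathcal{E}$ of the form $\mathcal{E}'\le -c'(\alpha/\lambda)\mathcal{E}^{m/\ell}$'' is not a valid inference: the equivalence $F\sim\mathcal{E}$ says nothing about $F'$ versus $\mathcal{E}'$, and no such pointwise ODI for $\mathcal{E}$ follows ($\mathcal{E}$ could be locally constant while $F$ decreases inside the equivalence band). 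The correct route, which is the one the paper takes, is to use the equivalence on the right-hand side only, obtaining $F'\le -k_4(\alpha/\lambda)F^{m/\ell}$, integrate \emph{that} inequality (separation of variables for $m>\ell$, Gr\"onwall for $m=\ell$), and only then convert the resulting bound on $F(t)$ into a bound on $\mathcal{E}(t)$ via $F\ge k_1\mathcal{E}$. You should also record explicitly the choice $r=(m-\ell)/\ell$, without which the two competing powers $\mathcal{E}^{rm/(m-\ell)}$ and $\mathcal{E}^{r+1}$ do not coincide and the positive remainder from the $j$-Young split cannot be absorbed by the principal negative term.
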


We state and prove the following lemmas, that will be needed in the proof of  Theorem \ref{thm2}.
\begin{lemma}\label{Abslem}
Let $u$ be a solution of (\ref{ev1}) and assume that the conditions ($A_2$), ($A_4$) and ($B_1$) hold.  Then, there exist positive constants $k_1$ and $k_2$ such that the energy function $\mathcal{G}(t)$ satisfies
\begin{equation}\nonumber
k_1\mathcal{E}(t) \leq \frac{1}{\lambda(t)}\mathcal{G}(t) \leq k_2 \mathcal{E}(t).
\end{equation}
\end{lemma}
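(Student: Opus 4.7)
The plan is to show that both correction terms in
\[\frac{1}{\lambda(t)}\mathcal{G}(t)=\mathcal{E}(t)+\frac{\mu\alpha(t)}{\lambda(t)}\mathcal{E}^{r}(t)\langle P(u_{t}),u\rangle+\frac{\nu\alpha(t)\delta^{1/\ell}(t)}{\lambda(t)}\mathcal{E}^{r+1/q+1/\ell'}(t)\]
can be squeezed between fixed multiples of $\mathcal{E}(t)$. The last term is cheap: by $(B_{1})$, $\alpha(t)\delta^{1/\ell}(t)/\lambda(t)\leq 1$, and since $r+1/q+1/\ell'\geq 1$ combined with $\mathcal{E}(t)\leq\mathcal{E}(0)$ (Lemma~2.1) yields $\mathcal{E}^{r+1/q+1/\ell'}(t)\leq\mathcal{E}(0)^{r+1/q+1/\ell'-1}\mathcal{E}(t)$, this term is nonnegative and sits between $0$ and a fixed multiple of $\mathcal{E}(t)$.

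The central work is the middle term. The pairing inequality together with the $W'$-to-$V'$ comparison in $(A_{2})$ gives $|\langle P(u_{t}),u\rangle|\leq\delta^{1/\ell}(t)\Vert P(u_{t})\Vert_{V'}\Vert u\Vert_{W}$, and $(A_{4})$ with the embedding $X\hookrightarrow W$ gives $\Vert u\Vert_{W}\leq kc_{0}^{1/q}\mathcal{E}^{1/q}(t)$, so that
\[\mathcal{E}^{r}(t)|\langle P(u_{t}),u\rangle|\leq kc_{0}^{1/q}\,\delta^{1/\ell}(t)\,\mathcal{E}^{r+1/q}(t)\,\Vert P(u_{t})\Vert_{V'}.\]
Since $r+1/q+1/\ell'\geq 1$ is equivalent to $r+1/q-1/\ell\geq 0$, I would factor $\mathcal{E}^{r+1/q}=\mathcal{E}^{r+1/q-1/\ell}\cdot\mathcal{E}^{1/\ell}$ and apply Young's inequality with conjugate exponents $\ell,\ell'$ to the second factor to obtain
\[\mathcal{E}^{r+1/q}\Vert P(u_{t})\Vert_{V'}\leq\frac{\mathcal{E}^{r+1/q+1/\ell'}}{\ell}+\frac{\mathcal{E}^{r+1/q-1/\ell}\Vert P(u_{t})\Vert_{V'}^{\ell'}}{\ell'}.\]
The first summand already has the form of the third term of $\mathcal{G}$. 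The second is reduced by the Pucci-Serrin growth consequence of $(A_{1})$, namely $\Vert P(u_{t})\Vert_{V'}^{\ell'}\leq C\mathcal{E}(t)$, into the same form, producing in total $\mathcal{E}^{r}|\langle P(u_{t}),u\rangle|\leq K\delta^{1/\ell}(t)\mathcal{E}^{r+1/q+1/\ell'}(t)$.

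Then both inequalities of the lemma come out by bookkeeping. For the upper bound, multiply the last estimate by $\mu\alpha/\lambda$, use $(B_{1})$ to absorb the $\alpha\delta^{1/\ell}/\lambda$ factor, and use $\mathcal{E}\leq\mathcal{E}(0)$ to reduce the excess power of $\mathcal{E}$ to one; combined with the trivial bound on the third term of $\mathcal{G}$ this yields $\mathcal{G}(t)/\lambda(t)\leq k_{2}\mathcal{E}(t)$. For the lower bound,
\[\frac{\mathcal{G}(t)}{\lambda(t)}\geq\mathcal{E}(t)+(\nu-K\mu)\frac{\alpha(t)\delta^{1/\ell}(t)}{\lambda(t)}\mathcal{E}^{r+1/q+1/\ell'}(t),\]
so choosing the design parameter $\nu\geq K\mu$ renders the last term nonnegative and we conclude $\mathcal{G}/\lambda\geq\mathcal{E}$, i.e.\ $k_{1}=1$ suffices.

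The delicate point is the pointwise control $\Vert P(u_{t})\Vert_{V'}^{\ell'}\leq C\mathcal{E}$ needed to handle the residual Young term; $(A_{2})$ only supplies a bound involving $\langle B(t,x,u_{t}),u_{t}\rangle$, which is time-integrable but not pointwise controllable, so one must read this pointwise estimate off from $(A_{1})$ in the standard Hamiltonian direction (relating $\Vert P(v)\Vert_{V'}^{\ell'}$ to $\mathcal{P}^{*}(v)$). That step, together with the subsequent calibration of $\nu$ relative to $\mu$, is where care is needed.
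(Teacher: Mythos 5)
Your proposal is correct and follows essentially the same route as the paper: bound the cross term by $C\,\delta^{1/\ell}(t)\,\mathcal{E}^{r+1/q+1/\ell'}(t)$ via ($A_2$), ($A_4$) and the embedding $X\hookrightarrow W$, then conclude with ($B_1$), $\mathcal{E}(t)\le \mathcal{E}(0)$ and smallness of $\mu,\nu$; your Young's-inequality detour and your sign-based lower bound are only minor variants of the paper's direct absolute-value estimate in (\ref{Abs35}). The ``delicate point'' you flag --- the pointwise bound $\Vert P(u_t)\Vert_{V'}^{\ell'}\le C\,\mathcal{E}(t)$ --- is indeed also used by the paper, silently, in passing from $\Vert P(u_t)\Vert_{V'}\Vert u\Vert_X$ to $\mathcal{E}^{1/\ell'+1/q}$, so your explicit acknowledgement of it is a point in your favour rather than a gap.
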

\begin{proof}
From (\ref{NewG}), we have 
\begin{equation}\label{NewGHE}
\begin{split}
\mathcal{G}(t)=\lambda(t) \mathcal{E}(t) + \mu \alpha(t)\mathcal{E}^r(t)\langle P(u_t), u\rangle +\nu \alpha(t) \delta^{\frac{1}{\ell}}(t) \mathcal{E}^{r+\frac{1}{q} +\frac{1}{\ell^{\prime}}}(t).
\end{split}
\end{equation}
Application of H\"{o}lder, Sobolev inequalities and assumption ($A_2$), ($A_4$) together with (\ref{energy}) to the second term in (\ref{NewGHE}), gives
\begin{equation}\label{Abs35}
\begin{split}
 \mu \alpha(t)\mathcal{E}^r(t)\langle P(u_t), u\rangle\leq& \mu \alpha(t)\mathcal{E}^r(t) \Vert P(u_t)\Vert_{W^\prime} \Vert u\Vert_W\\
 \leq &  \mu k \alpha(t)\delta^{\frac{1}{\ell}}(t)\mathcal{E}^r(t)\Vert P(u_t)\Vert_{V^\prime}\Vert u \Vert_X\\
  \leq &  \mu c_1 \alpha(t)\delta^{\frac{1}{\ell}}(t)\mathcal{E}^{r +\frac{1}{\ell^{\prime}}+\frac{1}{q}} (t),\end{split}
 \end{equation}
 where $c_1=c_1(k , c_0 , q)$ and $r+\frac{1}{q} +\frac{1}{\ell^{\prime}}\geq 1$. Then, using (\ref{Abs35}), (\ref{e0}) and condition ($B_1$), we have
\begin{equation}\nonumber
\begin{split}
|\frac{1}{\lambda(t)}\mathcal{G}(t)- \mathcal{E}(t)| &\leq  (\mu c_1 + \nu)\frac{\alpha(t)\delta^{\frac{1}{\ell}}(t)}{\lambda(t)} \mathcal{E}(t) \mathcal{E}^{(r +\frac{1}{\ell^{\prime}} +\frac{1}{q} -1)}(t)\\&
\leq  ( \mu c_1 +\nu) c^*_{_E} \mathcal{E}(t),
\end{split}
\end{equation} 
where $c^*_{_E} =\mathcal{E}^{(r+\frac{1}{q} +\frac{1}{\ell^{\prime}} -1)}(0)$. For $\mu$, $\nu$ small enough, we have the desired estimate.
\end{proof}

\begin{lemma}
Assume that the conditions ($A_3$), ($A_4$) hold. Let $u$ be a solution of (\ref{ev1}), then the function $\mathcal{H}$ satisfies
\begin{align*}
 \frac{d}{d t}\langle P(u_t) , u \rangle \leq& \langle P(u_t) , u_t \rangle+p\mathcal{P}^*(u_t) -\bigl(\langle A(u) , u\rangle -p\mathcal{A}(u)\bigr)\\&  +  c_2 (\eta(t))^\frac{1}{m} \bigl[-\mathcal{E}^{\prime}(t)\bigr]^{\frac{1}{m^\prime}}  \mathcal{E}^{\frac{1}{q}}(t)-p\mathcal{E}(t).
\end{align*}
\end{lemma}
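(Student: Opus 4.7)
The plan is to differentiate $\langle P(u_t), u\rangle$ in time, substitute the evolution equation to eliminate $[P(u_t)]_t$, and then estimate the three resulting pieces: the kinetic-type term $\langle P(u_t), u_t\rangle$, the stiffness term $-\langle A(u), u\rangle$, and the damping cross-term $-\langle B(t,x,u_t), u\rangle$.

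First I would use the product rule (justified by the strong-solution regularity and the distributional identity (\ref{dist}) with $\phi = u$) to write
\[
\frac{d}{dt}\langle P(u_t), u\rangle = \langle [P(u_t)]_t, u\rangle + \langle P(u_t), u_t\rangle,
\]
and then substitute $[P(u_t)]_t = -A(u) - B(t,x,u_t)$ from (\ref{ev1}) to obtain
\[
\frac{d}{dt}\langle P(u_t), u\rangle = \langle P(u_t), u_t\rangle - \langle A(u), u\rangle - \langle B(t,x,u_t), u\rangle.
\]
The $\langle P(u_t), u_t\rangle$ term is already in the target form.

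Next I would massage $-\langle A(u), u\rangle$ by adding and subtracting $p\mathcal{A}(u)$ and then using the energy identity. Writing $-\langle A(u), u\rangle = -\bigl(\langle A(u), u\rangle - p\mathcal{A}(u)\bigr) - p\mathcal{A}(u)$ and invoking (\ref{energy}) in the form $\mathcal{A}(u) = \mathcal{E}(t) - \mathcal{P}^*(u_t)$ produces exactly the two "good" contributions $-\bigl(\langle A(u),u\rangle - p\mathcal{A}(u)\bigr)$, $-p\mathcal{E}(t)$, and the leftover $+p\mathcal{P}^*(u_t)$ appearing in the claimed inequality. Assumption $(A_4)$ guarantees $\langle A(u),u\rangle - p\mathcal{A}(u) \geq (q-p)\mathcal{A}(u)$ so this rearrangement is meaningful, but strictly for the statement no sign information is needed at this stage.

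The only real estimation step is the damping cross-term $\langle B(t,x,u_t), u\rangle$. Here I would chain the bounds from $(A_3)$, the embedding $\|u\|_W \leq k\|u\|_X$, and the coercivity in $(A_4)$. Concretely,
\[
|\langle B(t,x,u_t), u\rangle| \leq \|B(t,x,u_t)\|_{W'}\|u\|_W \leq k\,(\eta(t))^{1/m}\bigl[-\mathcal{E}'(t)\bigr]^{1/m'}\|u\|_X,
\]
where I have already used the consequence of $(A_3)$ and $(A_5)$ recorded just before the lemma. Since $\|u\|_X^q \leq c_0\mathcal{A}(u) \leq c_0\mathcal{E}(t)$ by $(A_4)$ and Lemma 2.1, we get $\|u\|_X \leq (c_0\mathcal{E}(t))^{1/q}$, and absorbing constants into a single $c_2$ yields
\[
|\langle B(t,x,u_t), u\rangle| \leq c_2\,(\eta(t))^{1/m}\bigl[-\mathcal{E}'(t)\bigr]^{1/m'}\mathcal{E}^{1/q}(t).
\]
Assembling the three pieces gives the asserted inequality.

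I do not anticipate any serious obstacle: the argument is essentially a controlled application of the product rule followed by the standard Hölder/Sobolev/energy sandwich. The only place where one has to be slightly careful is justifying the product-rule differentiation on $J$ using the distributional identity (\ref{dist}) and the absolute continuity of $\mathcal{E}$ from $(A_5)$; everything else is algebraic bookkeeping of constants.
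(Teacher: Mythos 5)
Your proposal is correct and follows essentially the same route as the paper: differentiate $\langle P(u_t),u\rangle$ via the distribution relation, substitute the equation, absorb $-p\mathcal{A}(u)$ through the energy identity $\mathcal{E}=\mathcal{P}^*(u_t)+\mathcal{A}(u)$ to produce the $+p\mathcal{P}^*(u_t)-p\mathcal{E}(t)$ terms, and bound the damping cross-term by $\Vert B\Vert_{W'}\Vert u\Vert_W$ using ($A_3$), the embedding $\Vert u\Vert_W\leq k\Vert u\Vert_X$, and ($A_4$). No substantive differences.
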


\begin{proof}
From the derivative of  (\ref{dist}) and assumption ($A_3$), ($A_4$) and the identity (\ref{energy}), we have
\begin{equation}\nonumber
\begin{split}
\frac{d}{d t}\langle P(u_t) , u \rangle =&\langle P(u_t) , u_t \rangle -\langle A(u) , u\rangle -\langle B(t , x , u_t), u \rangle  \\&+ p \bigl(\mathcal{P}^*(u_t) + \mathcal{A}(u) -\mathcal{E}(t)\bigr)
 \\
\leq&  \langle P(u_t) , u_t \rangle+p\mathcal{P}^*(u_t) -\bigl(\langle A(u) , u\rangle -p\mathcal{A}(u)\bigr)\\&  + \Vert B(t , x , u_t)\Vert_{W^\prime} \Vert u\Vert_{W}   -p\mathcal{E}(t)
\\  \leq & \langle P(u_t) , u_t \rangle+p\mathcal{P}^*(u_t) -\bigl(\langle A(u) , u\rangle -p\mathcal{A}(u)\bigr)\\&  +  c_2 (\eta(t))^\frac{1}{m} \bigl[-\mathcal{E}^{\prime}(t)\bigr]^{\frac{1}{m^\prime}}  \mathcal{E}^{\frac{1}{q}}(t)-p\mathcal{E}(t).
\end{split}
\end{equation}

\end{proof}

\begin{proof}[Proof of Theorem \ref{thm2}]
Differentiating the function $\mathcal{H}(t)$ in (\ref{evolH}), we have
\begin{equation}\label{Abs39}
\begin{split}
\mathcal{H}^{\prime}(t) =&\lambda(t)\mathcal{E}^{\prime}(t)  +\lambda^{\prime}(t) \mathcal{E}(t) + \mu \alpha^{\prime}(t) \mathcal{E}^r(t)\langle P(u_t), u\rangle + \mu\alpha(t) \mathcal{E}^r(t) \frac{d}{dt} \langle P(u_t), u\rangle  \\&+\mu r\alpha(t) \mathcal{E}^{r-1}(t)\mathcal{E}^{\prime}(t)\langle P(u_t), u\rangle.
\end{split}
\end{equation}
For ease of estimation, we re-expressed the second and third term of (\ref{Abs39}) in terms of $\mathcal{H}(t)$ and $\langle P(u_t), u\rangle$. Hence, we have
\begin{equation}\label{Abs310}
\lambda^{\prime}(t) \mathcal{E}(t) + \mu \alpha^{\prime}(t) \mathcal{E}^r(t)\langle P(u_t), u\rangle=\frac{\lambda^{\prime}(t)}{\lambda(t)} \mathcal{H}(t)  + \mu\bigl(\frac{\alpha(t)}{\lambda(t)} \bigr)^{\prime} \lambda(t) \mathcal{E}^r(t)\langle P(u_t), u\rangle.
\end{equation}
The second term on the right of (\ref{Abs310}) can be re-estimated using ($A_2$), ($A_4$),  ($B_3$),(\ref{energy}) and (\ref{e0}) as follows:
\begin{equation}\nonumber
\begin{split}
\mu&\bigl(\frac{\alpha(t)}{\lambda(t)} \bigr)^{\prime} \lambda(t) \mathcal{E}^r(t)\langle P(u_t), u\rangle\\
 \leq & \mu k\Bigl\vert \bigl(\frac{\alpha(t)}{\lambda(t)} \bigr)^{\prime} \Bigr\vert \lambda(t)\delta^{\frac{1}{\ell}}(t)\mathcal{E}^r(t)\Vert P(u_t)\Vert_{V^\prime}\Vert u\Vert_X\\ \leq & \mu c_1  \Bigl(\frac{\lambda(t)}{\alpha(t)} \Bigl\vert\Bigl( \frac{\alpha(t)}{\lambda(t)} \Bigr)^{\prime}\Bigr\vert\Bigr)\delta^{\frac{1}{\ell}}(t)\alpha(t)\mathcal{E}^r(t)\Vert P(u_t)\Vert_{V^\prime}  \mathcal{A}(u)^{\frac{1}{q}}\\
\leq &
 \mu \epsilon_1 c_{_{\alpha\lambda}}c_{_1} \alpha(t)\mathcal{E}^r(t)\Vert P(u_t)\Vert^{\ell^{\prime}}_{V^\prime} + \mu c(\epsilon_1) c_{_{\alpha\lambda}}c_{_1} \alpha(t)\mathcal{E}^r(t) \mathcal{A}(u)^{\frac{\ell}{q}}\\
 \leq &
 \mu \epsilon_1 c_{_{\alpha\lambda}}c_{_1} \alpha(t)\mathcal{E}^r(t)\Vert P(u_t)\Vert^{\ell^{\prime}}_{V^\prime} + \mu c(\epsilon_1) c_{_{\alpha\lambda}}c_{_3} \alpha(t)\mathcal{E}^{r}(t) \mathcal{A}(u),
\end{split}
\end{equation}
where  $c_3=c_3(c_1, \mathcal{E}(0))$.
For the fourth term, using ($A_2$), ($A_4$)  and (\ref{energy}), we obtain
\begin{equation}\nonumber
\begin{split}
\mu r \alpha(t) \mathcal{E}^{r-1} \mathcal{E}^{\prime}(t)\langle P(u_t) , u \rangle &\leq \mu r \alpha(t) \mathcal{E}^{r-1}(t) (-\mathcal{E}^{\prime}(t))|\langle P(u_t) , u \rangle| 
\\&\leq  \mu r k\delta^{\frac{1}{\ell}}(t) \alpha(t) \mathcal{E}^{r-1} (-\mathcal{E}^{\prime}(t))\Vert P(u_t)\Vert_{V^\prime} \Vert u\Vert_X\\&
\leq  \mu c_{_4} r\delta^{\frac{1}{\ell}}(t) \alpha(t) \mathcal{E}^{r-1+\frac{1}{q} +\frac{1}{\ell^{\prime}}}(t) (-\mathcal{E}^{\prime}(t))\\&
\leq \frac{- \mu c_{_4} r}{r+\frac{1}{q} +\frac{1}{\ell^{\prime}}} \delta^{\frac{1}{\ell}}(t) \alpha(t)\frac{d}{dt}\bigl(\mathcal{E}^{r+\frac{1}{q} +\frac{1}{\ell^{\prime}}}(t) \bigr).
\end{split}
\end{equation}
Therefore,
\begin{equation}\label{Abs313}
\begin{split}
\mathcal{H}^{\prime}(t) \leq &\frac{\lambda^{\prime}(t)}{\lambda(t)} \mathcal{H}(t)  + \mu\alpha(t)\mathcal{E}^r(t) \bigl( \epsilon_1 c_{_{\alpha\lambda}}c_{_1} \Vert P(u_t)\Vert^{\ell^{\prime}}_{V^\prime} + \langle P(u_t) , u_t \rangle+p\mathcal{P}^*(u_t) \bigr) \\&- \mu\alpha(t) \mathcal{E}^r(t) \bigl(\langle A(u) , u\rangle -(p+c(\epsilon_1) c_{_{\alpha\lambda}}c_{_3})\mathcal{A}(u)\bigr) -\mu p\alpha(t) \mathcal{E}^{r+1}(t) +\lambda(t)\mathcal{E}^{\prime}(t) \\&-\frac{ \mu c_{_4} r \delta^{\frac{1}{\ell}}(t) \alpha(t)}{r+\frac{1}{q} +\frac{1}{\ell^{\prime}}} \frac{d}{dt}\bigl(\mathcal{E}^{r+\frac{1}{q} +\frac{1}{\ell^{\prime}}}(t) \bigr) + \mu c_2 \alpha(t) (\eta(t))^\frac{1}{m} \bigl[-\mathcal{E}^{\prime}(t)\bigr]^{\frac{1}{m^\prime}}   \mathcal{E}^{r+\frac{1}{q}}(t).
\end{split}
\end{equation}
Applying Young's inequality  to the last term in (\ref{Abs313}), gives
\begin{equation}\label{Abs314}
\begin{split}
  \mu &c_2 \alpha(t) (\eta(t))^\frac{1}{m} \bigl[-\mathcal{E}^{\prime}(t)\bigr]^{\frac{1}{m^\prime}}   \mathcal{E}^{r+\frac{1}{q}}(t)
\\&\leq - \epsilon_2  c_2\mu\alpha(t) \eta^{\frac{m^\prime}{m}}(t) \mathcal{E}^{\prime}(t) +c(\epsilon_2 ) c_2\mu\alpha(t)  \mathcal{E}^{m(r+\frac{1}{q})}(t)\\
 &\leq  - \epsilon_2  c_2\mu\alpha(t) \eta^{\frac{m^\prime}{m}}(t) \mathcal{E}^{\prime}(t) + c(\epsilon_2 ) c_2c_{_E}\mu\alpha(t) \mathcal{E}^{r+1}(t),
\end{split}
\end{equation}
where $c_{_E}= \mathcal{E}^{r(m-1)+\frac{m-q}{q}}(0) $. 
Substituting (\ref{Abs314}) in (\ref{Abs313}) and applying condition ($B_1$) to the third to the last term on the right side of (\ref{Abs314}), we obtain
\begin{equation}\label{Abs315}
\begin{split}
\mathcal{H}^{\prime}(t) \leq &\frac{\lambda^{\prime}(t)}{\lambda(t)} \mathcal{H}(t)  + \mu\alpha(t)\mathcal{E}^r(t) \bigl( \epsilon_1 c_{_{\alpha\lambda}}c_{_1} \Vert P(u_t)\Vert^{\ell^{\prime}}_{V^\prime} +(p+1) \langle P(u_t) , u_t \rangle-p\mathcal{P}(u_t) \bigr) \\&
- \mu\alpha(t) \mathcal{E}^r(t) \bigl(\langle A(u) , u\rangle -(p+c(\epsilon_1) c_{_{\alpha\lambda}}c_{_3})\mathcal{A}(u)\bigr)+\alpha(t) \eta^{\frac{m^\prime}{m}}(t)\bigl( 1- \epsilon_2  c_2\mu\bigr)\mathcal{E}^{\prime}(t)
 \\&
-\frac{ \mu c_{_4} r \delta^{\frac{1}{\ell}}(t) \alpha(t)}{r+\frac{1}{q} +\frac{1}{\ell^{\prime}}} \frac{d}{dt}\bigl(\mathcal{E}^{r+\frac{1}{q} +\frac{1}{\ell^{\prime}}}(t) \bigr) -\mu \alpha(t) \bigl(p - c(\epsilon_2 ) c_2c_{_E}  \bigr) \mathcal{E}^{r+1}(t).
 \end{split}
\end{equation}
Using the integrating factor $\frac{1}{\lambda(t)}$ and  condition ($A_1$), ($A_4$) with $q\geq p+c(\epsilon_1) c_{_{\alpha\lambda}}c_{_3}$ in (\ref{Abs315}), we get the following estimate:
\begin{equation}\label{Abs316}
\begin{split}
\Bigl(\frac{1}{\lambda(t)} \mathcal{H}(t)\Bigr)^{\prime} \leq &   \mu ( \epsilon_1 c_{_{\alpha\lambda}}c_{_1} +k_0)\frac{\alpha(t)}{\lambda(t)}\mathcal{E}^r(t) \Vert P(u_t)\Vert^{\ell^{\prime}}_{V^\prime}  +\frac{\alpha(t)}{\lambda(t)}\eta^{\frac{m^\prime}{m}}(t)\bigl( 1- \epsilon_2  c_2\mu\bigr)\mathcal{E}^{\prime}(t)
 \\&
-\frac{ \mu c_{_3} r \delta^{\frac{1}{\ell}(t)}}{r+\frac{1}{q} +\frac{1}{\ell^{\prime}}} \frac{\alpha(t)}{\lambda(t)} \frac{d}{dt}\bigl(\mathcal{E}^{r+\frac{1}{q} +\frac{1}{\ell^{\prime}}}(t) \bigr) -\mu \frac{\alpha(t)}{\lambda(t)} \bigl(p - c(\epsilon_2 ) c_2c_{_E}  \bigr) \mathcal{E}^{r+1}(t).
 \end{split}
\end{equation}
The second to the last term in (\ref{Abs316}) can be re-expressed using condition ($B_2$) as
\begin{equation}\label{Abs317}
\begin{split}
-\frac{ \mu c_{_4} r}{r+\frac{1}{q} +\frac{1}{\ell^{\prime}}}  \frac{ \alpha(t)  \delta^{\frac{1}{\ell}(t)}}{\lambda(t)} \frac{d}{dt}\bigl(\mathcal{E}^{r+\frac{1}{q} +\frac{1}{\ell^{\prime}}}(t) \bigr) =& -\frac{ \mu c_{_4} r}{r+\frac{1}{q} +\frac{1}{\ell^{\prime}}} \frac{d}{dt} \bigl( \frac{ \alpha(t)   \delta^{\frac{1}{\ell}(t)}}{\lambda(t)} \mathcal{E}^{r+\frac{1}{q} +\frac{1}{\ell^{\prime}}}(t) \bigr) \\ &
+ \frac{ \mu c_{_4} r}{r+\frac{1}{q} +\frac{1}{\ell^{\prime}}} \bigl( \frac{ \alpha(t)   \delta^{\frac{1}{\ell}(t)}}{\lambda(t)}\bigr)^{\prime}  \mathcal{E}^{r+\frac{1}{q} +\frac{1}{\ell^{\prime}}}(t)\\\leq &
 -\frac{ \mu c_{_4} r}{r+\frac{1}{q} +\frac{1}{\ell^{\prime}}} \frac{d}{dt} \bigl( \frac{ \alpha(t)   \delta^{\frac{1}{\ell}(t)}}{\lambda(t)} \mathcal{E}^{r+\frac{1}{q} +\frac{1}{\ell^{\prime}}}(t) \bigr).
\end{split}
\end{equation}
Then,  using  assumption ($A_2$) and Young's inequality,  the first term on the right side of (\ref{Abs317}), gives the following:
\begin{equation}\label{Abs318}
\begin{split}
 \mu& ( \epsilon_1 c_{_{\alpha\lambda}}c_{_1} +k_0)\frac{\alpha(t)}{\lambda(t)} \mathcal{E}^r(t) \Vert P(u_t)\Vert^{\ell^{\prime}}_{V^\prime}\\& \leq 
  \mu ( \epsilon_1 c_{_{\alpha\lambda}}c_{_1} +k_0)\frac{\alpha(t)}{\lambda(t)}j(t) \mathcal{E}^r(t) \bigl[-\mathcal{E}^{\prime}(t)\bigr]^{\ell/m}   \\& \leq 
   -\mu\epsilon_3 ( \epsilon_1 c_{_{\alpha\lambda}}c_{_1} +k_0)\frac{\alpha(t) j^{\frac{m}{(m-\ell)}}(t)}{\lambda(t)} \mathcal{E}^{\prime}(t) + \mu c(\epsilon_3) ( \epsilon_1 c_{_{\alpha\lambda}}c_{_1} + k_0)\frac{\alpha(t)}{\lambda(t)} \mathcal{E}^{\frac{r m}{(m-\ell)}}(t).
 \end{split}
\end{equation}
Substituting (\ref{Abs317}) and (\ref{Abs318}) in (\ref{Abs316}), we obtain
\begin{equation}\nonumber
\begin{split}
\Bigl(&\frac{1}{\lambda(t)} \mathcal{H}(t) + \frac{ \mu c_{_4} r}{r+\frac{1}{q} +\frac{1}{\ell^{\prime}}}  \frac{ \alpha(t)  \delta^{\frac{1}{\ell}}(t)}{\lambda(t)} \mathcal{E}^{r+\frac{1}{q} +\frac{1}{\ell^{\prime}}}(t) \Bigr)^{\prime}\\ \leq &  \mu c(\epsilon_3) ( \epsilon_1 c_{_{\alpha\lambda}}c_{_1} +k_0)\frac{\alpha(t)}{\lambda(t)} \mathcal{E}^{\frac{r m}{(m-\ell)}}(t)  
 -\mu \frac{\alpha(t)}{\lambda(t)} \bigl(p - c(\epsilon_2 ) c_2c_{_E}  \bigr) \mathcal{E}^{r+1}(t) \\&+\frac{\alpha(t)}{\lambda(t)} \eta^{\frac{m^\prime}{m}}(t) \Bigl[ \bigl( 1- \epsilon_2  c_2\mu\bigr) - \mu\epsilon_3 ( \epsilon_1 c_{_{\alpha\lambda}}c_{_1} +k_0)\frac{ j^{\frac{m}{(m-\ell)}}(t)}{\eta^{\frac{m^\prime}{m}}(t)} \Bigr]\mathcal{E}^{\prime}(t).
 \end{split}
\end{equation}
Set $r=(m-\ell)/\ell$, and use condition ($B_4$) to get
\begin{equation}\label{cases}
\begin{split}
\Bigl(&\frac{1}{\lambda(t)} \mathcal{H}(t) + \frac{ \mu c_{_4} q(m -\ell)}{(q(m-1) + \ell)}  \frac{ \alpha(t)  \delta^{\frac{1}{\ell}}(t)}{\lambda(t)} \mathcal{E}^{\frac{q(m-1) +\ell}{q\ell}}(t) \Bigr)^{\prime}\\ \leq &    
 -\mu \frac{\alpha(t)}{\lambda(t)} \bigl[p - c(\epsilon_2 ) c_2c_{_E} - c(\epsilon_3)( \epsilon_1 c_{_{\alpha\lambda}}c_{_1} +k_0) \bigr] \mathcal{E}^{\frac{m}{\ell}}(t) \\&+\frac{\alpha(t)}{\lambda(t)} \eta^{\frac{m^\prime}{m}}(t) \Bigl[ \bigl( 1- \epsilon_2  c_2\mu\bigr) - \mu\epsilon_3 ( \epsilon_1 c_{_{\alpha\lambda}}c_{_1} +k_0) c_{_{j\eta}}\Bigr]\mathcal{E}^{\prime}(t).
 \end{split}
\end{equation}
Case 1:\\
From (\ref{NewG}), we have that 
\[\frac{1}{\lambda(t)}G(t)=\frac{1}{\lambda(t)} \mathcal{H}(t)+ \frac{ \mu c_{_3} q(m -\ell)}{(q(m-1) + \ell)}  \frac{ \alpha(t)  \delta^{\frac{1}{\ell}}(t)}{\lambda(t)} \mathcal{E}^{\frac{q(m-1) +\ell}{q\ell}}(t),\]
where $\nu= \frac{ \mu c_{_4} q(m -\ell)}{(q(m-1) + \ell)} $. Therefore, using the equivalence result of $G(t)$ and $E(t)$ in Lemma \ref{Abslem}, and choosing $\mu , c(\epsilon_i)(i=2, 3)$ small enough such that
\begin{align*}
\mu[p - c(\epsilon_2 ) c_2c_{_E} -c(\epsilon_3) ( \epsilon_1 c_{_{\alpha\lambda}}c_{_1} +k_0)] \geq k_3, \\
 \bigl( 1- \epsilon_2  c_2\mu\bigr) - \mu\epsilon_3 ( \epsilon_1 c_{_{\alpha\lambda}}c_{_1} +k_0) c_{_{j\eta}}\geq 0,
\end{align*}
for a positive constant $k_3$. Therefore, we obtain 
\begin{equation}\nonumber
\begin{split}
\Bigl(\frac{1}{\lambda(t)} \mathcal{G}(t) \Bigr)^{\prime} \leq     
 -\frac{k_3}{k_2}\frac{\alpha(t)}{\lambda(t)}\Bigl[\frac{1}{\lambda(t)} \mathcal{G}(t)\Bigr]^{\frac{m}{\ell}}.
 \end{split}
\end{equation}
Define $F(t)$ by 
\[F(t)=: \frac{1}{\lambda(t)} \mathcal{G}(t), \]
hence,we have 
\begin{equation}\label{lastevo}
F^{\prime}(t)\leq -k_4 \frac{\alpha(t)}{\lambda(t)}F^{\frac{m}{\ell}}(t).
\end{equation}
Integrating (\ref{lastevo}) over $(0 , t)$, we obtain 
\begin{equation}\nonumber
F(t)\leq \Bigl[ F^{\frac{\ell -m}{\ell}}(0)+ k_4(\frac{m-\ell}{\ell}) \int^t_0 \frac{\alpha(s)}{\lambda(s)} ds \Bigr]^{\frac{-\ell}{m-\ell}}.
\end{equation}
The equivalence of $F(t)$ and $\mathcal{E}(t)$ gives the desired result.
\end{proof}
Case 2: If  $m=\ell$, then $r=0$ and equation (\ref{cases}) reduces to
\begin{equation}\nonumber
\begin{split}
\Bigl(\frac{1}{\lambda(t)} \mathcal{H}(t) \Bigr)^{\prime}\leq &  
 -\mu \frac{\alpha(t)}{\lambda(t)} \bigl[p - c(\epsilon_2 ) c_2c_{_E} - c(\epsilon_3)( \epsilon_1 c_{_{\alpha\lambda}}c_{_1} +k_0) \bigr] \mathcal{E}(t) \\&+\frac{\alpha(t)}{\lambda(t)} \eta^{\frac{m^\prime}{m}}(t) \Bigl[ \bigl( 1- \epsilon_2  c_2\mu\bigr) - \mu\epsilon_3 ( \epsilon_1 c_{_{\alpha\lambda}}c_{_1} +k_0) c_{_{j\eta}}\Bigr]\mathcal{E}^{\prime}(t)
 \end{split}
\end{equation}
and choosing $\mu , c(\epsilon_i)(i=2, 3)$ small enough as before such that
\begin{align*}
\mu[p - c(\epsilon_2 ) c_2c_{_E} -c(\epsilon_3) ( \epsilon_1 c_{_{\alpha\lambda}}c_{_1} +k_0)] \geq k_3, \\
 \bigl( 1- \epsilon_2  c_2\mu\bigr) - \mu\epsilon_3 ( \epsilon_1 c_{_{\alpha\lambda}}c_{_1} +k_0) c_{_{j\eta}}\geq 0,
\end{align*}
for a positive constant $k_3$, we obtain
\begin{equation}\nonumber
\begin{split}
\Bigl(\frac{1}{\lambda(t)} \mathcal{G}(t) \Bigr)^{\prime} \leq     
 -\frac{k_3}{k_2}\frac{\alpha(t)}{\lambda(t)}\Bigl[\frac{1}{\lambda(t)} \mathcal{G}(t)\Bigr].
 \end{split}
\end{equation}
Setting $F(t)=\frac{1}{\lambda(t)} \mathcal{G}(t)$ as before and integrating the resulting estimate over $(0 , t)$, gives 
\[F(t) \leq F(0) exp\Bigl[-\frac{k_3}{k_2}\int^t_0  \frac{\alpha(s)}{\lambda(s)} ds \Bigr], \]
for $t\geq 0$. Again, since $F(t)\sim \mathcal{E}(t)$, we have the desired estimate. 

\section{Application}
We give examples for which our results apply.\\
The canonical problem  
\begin{equation}\label{examp1}
\bigl(|u_t|^{\ell-2} u_t\bigr)_t -a\nabla\cdot\bigl(|\nabla u|^{q-2} \nabla u\bigr) + b|u_t|^{m-2}u_t =0,
\end{equation}
with boundary conditions $u(t , x)=0 \quad \text{on}\quad \partial \Omega$ and initial conditions
\[u(0 , x)=\psi(x),\quad u_t(0 , x) =\phi(x) \quad \text{for}\quad x\in \Omega.\]
In the bounded domain, $W=[L^{\frac{qn}{n-q}}(\Omega)]^n$, $X=[W^{1,q}_0(\Omega)^n$, $V=[L^\ell(\Omega)]^n$ and \\$Y=[L^m(\Omega)]^n$,
$\alpha(t), \lambda(t), \delta^{\frac{1}{\ell}}(t)$ are constants.
The $C^1$ potential functions are given by
\begin{equation}\nonumber
\begin{split}
q\mathcal{A}(u)=\Vert \nabla u\Vert^q_q=\langle A(u) , u\rangle,\\
\ell \mathcal{P}(v)=\Vert v\Vert^\ell_{\ell} =\langle P(v) , v\rangle
\end{split}
\end{equation} 
and it is easy to show that the conditions given are satisfied.
Generalizing (\ref{examp1}) to 
\begin{equation}\nonumber
\bigl(|u_t|^{\ell-2} u_t\bigr)_t -a\nabla\cdot\bigl(|\nabla u|^{q-2} \nabla u\bigr) +b(t, x) |u_t|^{m-2}u_t=0,
\end{equation}
to an unbounded domain say, the $C^1$ potentials remain the same. We have that
\[ \langle b(t , x)|u_t|^{m-2}u_t , u_t\rangle\geq 0,\]
\[|B(t , x , u_t)|\leq [b(t , x)]^{\frac{1}{m}} [\langle B(t , x, u_t) , u_t\rangle]^{\frac{1}{m^\prime}},\]
where $\langle\cdot , \cdot\rangle$ denotes inner product in $\mathbb{R}^n$ with 
\[ b(t , x)\geq 0 \quad \text{and}\quad \Vert b(t , x\Vert_{\frac{qn}{qn-m(n-q)}} \in L^{\infty}_{loc}(J). \]
Taking $\eta(t)=\Vert b(t , x\Vert_{_{\frac{qn}{qn-m(n-q)}}}$, then its easy to show that
$ \Vert B(t , x, u_t)\Vert_{W^\prime} \leq (\eta(t))^\frac{1}{m} \langle B(t , x , u_t), u_t\rangle^{\frac{1}{m^\prime}}$. Other estimates can be obtained in the same manner.\\
More recently, Ogbiyele and Arawomo\cite{new} considered the nonlinear  problem
\begin{equation}\nonumber
u_{tt} -\Delta u  + b(t , x)|u_t|^{m-1} u_t + |u|^{p-1}u=0,
\end{equation}
where $x\in \mathbb{R}^n$ and showed that for $x\in B(R+t)$ with initial data compactly supported in the ball $B(R)$, the energy decay polynomially for $1\leq m\leq\frac{n+2}{n-2} $, where $W=L^{\frac{2n}{n-2}}(\mathbb{R}^n)$, $X=H^1(\mathbb{R}^n)$, $V=L^2(\mathbb{R}^n)$ and \\$Y=L^{m+1}(\mathbb{R}^n)$. 

Our results also apply to plate equations with $(A= \Delta^2)$,
Observe that, in the case of bounded domains, it is appropriate to choose $W=L^2(\Omega)$ where $\Omega$ is a bounded domian in $\mathbb{R}^n$.

\end{document}